\newtheorem{theorem}{Theorem}
\newtheorem{lemma}{Lemma}
\newcommand{\m}{\mathfrak m}
\newcommand{\rto}{\rightarrow}
\begin{document}

\title{Local structure of \'etale algebra}
\author{Thierry Coquand}
\date{}
\maketitle

\section*{Introduction}

The goal of this note is to provide a constructive version of the proof of local
structure of \'etale algebra, attributed to Chevalley in \cite[18.4.6 (ii)]{Gr},
in the setting of constructive mathematics \cite{LQ}. As in \cite{Gr,Raynaud}, the
main tool is Zariski Main Theorem, that was already proved constructively in \cite{ACL}.

\section{Statement of the local result}

Let $R$ be a residually discrete local ring with maximal ideal $\m$ and residual field $k = R/\m$.
Let $E$ be a (finitely presented) \'etale $R$-algebra.

\begin{theorem}
  There exists $s_1,\dots,s_m$ in $E$ that are comaximal mod. $\m$, i.e. their images generate the unit ideal in $E/\m E$,
  and each 
  $E[1/s_i]$ is a standard \'etale $R$-algebra.
\end{theorem}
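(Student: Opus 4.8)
The plan is to reduce the statement to a single primitive element for the closed fibre and then to transport a standard étale presentation of that fibre to a Zariski neighbourhood by comparing two étale algebras that agree on the fibre. Since $E$ is étale over $R$, the fibre $E/\m E = E\otimes_R k$ is a finite étale algebra over the discrete field $k$, hence (constructively) a finite product of finite separable field extensions $k_1\times\cdots\times k_m$. First I would invoke the primitive element theorem for such an algebra to produce $\bar t\in E/\m E$ generating it over $k$, whose minimal polynomial $\bar h\in k[X]$ is therefore monic and separable (squarefree), say $\bar h=\bar h_1\cdots\bar h_m$ with the $\bar h_i$ the distinct irreducible factors. Lifting $\bar t$ to some $t\in E$ and lifting $\bar h$ to a monic $h\in R[X]$ (possible because $R\rto k$ is onto), I set $C=R[X]/(h)$, which is finite free over $R$, and consider the standard étale algebra $C[1/h']$ together with the $R$-algebra map $\psi\colon C\rto E$, $X\mapsto t$.

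The key observation is that $\bar h$ squarefree forces $\bar h'(\bar t)$ to be a unit in $E/\m E$, so $h'(t)$ is invertible modulo $\m$; hence inverting $h'(t)$ does not remove any point of the closed fibre, and I obtain a map $\psi\colon C[1/h']\rto E[1/h'(t)]$ between two étale $R$-algebras. Reducing modulo $\m$, this map is the identity $\prod_i k_i\rto\prod_i k_i$ determined by $\bar t\mapsto\bar t$, hence an isomorphism on the whole closed fibre. A map of étale $R$-algebras is itself étale, so I am reduced to the following local structure statement: \emph{an étale morphism that induces an isomorphism on the closed fibre is, in a neighbourhood of that fibre, an open immersion}; concretely, $\mathrm{Spec}\,E[1/h'(t)]$ is identified with an open subscheme of $\mathrm{Spec}\,C[1/h']$ near each point of the fibre.

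To finish I would cover this open image by principal opens. For each factor $k_i$ (equivalently, each point $x_i$ of the fibre) the identification above yields a principal localization $C[1/h'][1/g_i]\xrightarrow{\ \sim\ }E[1/s_i]$ for suitable $g_i\in C$ and $s_i\in E$ with $s_i$ invertible at $x_i$. Then $E[1/s_i]\cong (R[X]/(h))[1/(h'g_i)]$ is visibly standard étale, and since each $s_i$ is a unit in the factor $k_i$, the family $s_1,\dots,s_m$ generates the unit ideal in $E/\m E$, i.e. is comaximal modulo $\m$. This also explains the need for several $s_i$: the open image need not be a single principal open, so one principal neighbourhood is extracted per point of the fibre.

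The main obstacle is the constructive content of the italicised local structure statement, namely passing from ``isomorphism on the closed fibre'' to ``open immersion near the fibre'' without invoking points or prime ideals. Here I would use Zariski Main Theorem from \cite{ACL} to write $E$, near the fibre, as a localization of a finite $R$-algebra $T$; this finiteness is exactly what feeds an effective Nakayama argument, turning the fibrewise isomorphism into the comaximal data $g_i,s_i$ above, with the finiteness of the index set $\{1,\dots,m\}$ coming from the explicit factorization $E/\m E=\prod_i k_i$ rather than from a compactness argument. A secondary point to handle carefully is the constructive primitive element theorem for $\prod_i k_i$ over the discrete field $k$ (with the usual separate treatment when $k$ is finite).
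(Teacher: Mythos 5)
Your plan has the right overall shape (reduce to a monogene residual picture, build a standard \'etale model, compare it with $E$ after localization), but it contains one fatal error and defers the hardest step. The fatal error is the definition of $\psi\colon C=R[X]/(h)\rto E$, $X\mapsto t$: for this to be a well-defined $R$-algebra map you need $h(t)=0$ in $E$, but $h$ is an arbitrary monic lift of $\bar h$ and $t$ an arbitrary lift of $\bar t$, so all you know is $h(t)\in\m E$. There is no way to correct $t$ (that would need $R$ henselian) and no way to replace $h$ by an exact monic annihilator of $t$, because $t$ need not be integral over $R$: $E$ is only finitely presented, not finite. This is precisely why the paper applies Zariski Main Theorem \emph{first}, replacing $E$ (up to comaximal localizations) by a finite $R$-subalgebra $B$; only then does a generator $y$ of the residual algebra satisfy a monic equation over $R$, and even then the paper needs a delicate descending-degree argument (the sequence $Q_l$) to produce a monic $Q$ with $Q(y)=0$ whose reduction has the \emph{correct} degree, so that $S=R[X]/(Q)[1/G(a)]$ surjects onto $E[1/t_i]$ with residual isomorphism. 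In your setup the analogous surjectivity also fails for the same reason: $E=R[t]+\m E$ does not give $E[1/s]=R[t][1/s]$ by Nakayama unless $E$ is finite over $R$.

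Two further gaps. First, the primitive element: $E/\m E=k_1\times\cdots\times k_m$ need not be monogene at all when $k$ is finite (e.g.\ $\mathbb{F}_2^3$ over $\mathbb{F}_2$), and the idempotents of $E/\m E$ do not lift along $E\rto E/\m E$, so ``the usual separate treatment when $k$ is finite'' is not a routine patch --- one must first localize $E$ to isolate each monogene factor, which again is what the paper's ZMT-plus-refinement step accomplishes. Second, your italicized claim (\'etale map which is an isomorphism on the closed fibre is an open immersion near the fibre) is exactly where the constructive content lives, and ``ZMT plus effective Nakayama'' is not an argument for it. The paper's actual mechanism is different and more concrete: having a \emph{surjection} $u\colon S\rto E$ with finitely generated kernel $I$ and $S/\m S\cong E/\m E$, it uses formal \'etaleness of $E$ to split $0\rto I/I^2\rto S/I^2\rto E\rto 0$, deduces $I/I^2=0$ residually, gets $I=I^2$ by Nakayama, hence $I=(1-f)$ idempotent and $S[1/f]\cong E$. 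Your comparison map is not surjective, so this mechanism is not available to you as written. I would recommend reordering: ZMT and passage to a finite algebra first, residual analysis second, lifting by Nakayama third.
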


 Note that $E/\m E$ is a (finitely presented) unramified $k$-algebra and so, following \cite{LQ,L},
 is a finite product of monogene separable extensions of $k$.

 Using constructive Zariski Main Theorem \cite{ACL} we get that there exists a finite $R$-subalgebra $B$ of $E$
 and elements $s_i \in B$, comaximal in $E$, such that $B_i = B[1/{s_i}] \cong E[1/s_i]$ for all $i$.

Some $B_i/\m B_i$ may be the trivial algebra. We discard them, which explains why in the end
result, we only get a family which is comaximal {\em residually}. 

Each $B_i/\m B_i$ is a corresponding product of monogene separable extensions.
By refining $s_i$, we can assume that each $B_i/\m B_i$ is a itself a monogene separable extension of $k$.

\medskip

\section{Some lemmas about $B/\m B$}

Consider now $A = B/\m B$ which is a finite $k$-algebra. We still write $s_i$ for its class in $A$.
Then $A[1/s_i]$ is an extension of $k$ of the form $k[X]/(p)$ for some separable polynomial $p$.

\begin{lemma}
  We have $u$ and $N$ such that $s_i^N(1-s_iu) = 0$. If we write $e_i = (s_iu)^N$, we have an idempotent $e_i$ of $A$ such that
  $A[1/e_i] = A[1/s_i]$.
\end{lemma}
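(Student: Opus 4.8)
The plan is to prove the three assertions in turn, writing $s = s_i$ and $e = e_i$ throughout and using that $A = B/\m B$ is a finite-dimensional $k$-vector space, with $k$ a discrete field. First I would produce $u$ and $N$ from the zero-dimensionality of a finite $k$-algebra, being careful to keep this step constructive. Since multiplication by $s$ is a $k$-linear endomorphism of the finite-dimensional space $A$, the Cayley--Hamilton theorem provides a monic $P(X) \in k[X]$ with $P(s) = 0$. Writing $P(X) = X^N R(X)$ with $R(0) \neq 0$ (the least-degree nonzero coefficient can be located because $k$ is discrete) and expanding $R(s) = R(0) + s\,\tilde R(s)$, the relation $s^N R(s) = 0$ rearranges to $s^N = s^{N+1}u$ with $u = -R(0)^{-1}\tilde R(s)$, that is, $s^N(1 - su) = 0$. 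This yields $u$ and $N$ explicitly.

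Next I would iterate to obtain idempotence. Repeated substitution in $s^N = s^{N+1}u$ gives $s^N = s^{N+k}u^k$ for all $k \geq 0$, and with $k = N$ this reads $s^N = s^{2N}u^N$. Hence, for $e = (su)^N = s^N u^N$,
\[
  e^2 = s^{2N}u^{2N} = (s^{2N}u^N)\,u^N = s^N u^N = e,
\]
so $e$ is an idempotent of $A$.

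Finally I would identify the two localizations by checking that each of $s, e$ becomes invertible in the localization at the other. In $A[1/s]$ the element $s^N$ is a unit, so $s^N(1-su)=0$ forces $su = 1$; thus $u = s^{-1}$ and $e = s^N u^N = 1$ is invertible. Conversely, in $A[1/e]$ the element $e = s^N u^N$ is a unit, so $s^N$ is a unit (with inverse $u^N e^{-1}$) and hence $s$ is a unit. By the universal property of localization these give mutually inverse $A$-algebra maps, and so $A[1/e] = A[1/s]$. The argument is elementary; the only delicate point is the first step, where I must extract $u$ and $N$ by an explicit polynomial computation rather than by an abstract stabilization of the chain $(s) \supseteq (s^2) \supseteq \cdots$, in order to stay within constructive mathematics.
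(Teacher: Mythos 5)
Your proof is correct and follows essentially the same route as the paper: extract a polynomial relation from integrality of $s_i$ over $k$, normalize by the lowest nonzero coefficient to get $s_i^N(1-s_iu)=0$, iterate to obtain $s_i^N=s_i^N(s_iu)^N$ and hence idempotency of $e_i$. Your third step (checking mutual invertibility to identify the two localizations) is left implicit in the paper but is the standard argument.
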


\begin{proof}
  Since $A$ is finite, $s_i$ is integral and satisfies a polynomial equation. Since $A[1/s_i]$ is non trivial, we can write this polynomial
  on the form $X^N(1-XP)$. We then have $s_i^N = s_i^N (s_iu)^N$ and so $e_i = e_i^2$ for $e_i = (s_iu)^N$. 
\end{proof}

$A[1/e_i]$ is a extension of the form $k[X]/(p)$. We can always assume that the constant coefficient of $p$ is $\neq 0$. (Since $p$ is
separable, the only problem is in the case $p=X$, but then we replace it by $p = X-1$.)

We can choose $x$ in $B$ such that $x$ generates $A[1/e_i]$. We then have $k[x]e_i = Ae_i$. If we take $y = xe_i$ we still have
$Ae_i = k[y]e_i$ but now, since we have $P(x) = 0$ and the constant coefficient of $P$ is $\neq 0$, it is in $xk[x]$, and so
$e_i$ is in $yk[y]$ and so $Ae_i\subseteq k[y]$.

\section{Lifting to $B$}

$B$ is finite over $R$, and,from the previous section, we have
$y$ in $B$ and $e_i$ in $R[y]$ such that $e_i B\subseteq R[y]+\m B$. By Nakayama's Lemma,
we get $p_i$ in $R[y]$
such that $p_iB\subseteq R[y]$\footnote{Let $b_1,\dots,b_m$
be generators of the $R$-module $B$. We can write $e_i b_j = q_j + \Sigma_l \mu_{jl}b_l$ with
$q_j$ in $R[y]$ and $\mu_{jl}$ in $\m$. We let $p_i$ be the determinant of the matrix $e_i\delta_{jl} - \mu_{jl}$.}.
We then have $B[1/p_i]\cong R[y][1/p_i]$.

To summarize, we have found $t_1,\dots,t_n$ in $B$, comaximal modulo $\m$, such that each $E[1/t_i]$ is of the form
$R[y][1/g]$ for some $y$ in $B$ and $g$ in $R[y]$.

\section{A candidate for a standard \'etale presentation}

 We now consider the \'etale algebra $E[1/t_i]$ and rewrite it as $E = R[y][1/g]$.

 We look at the $R$-algebra $R[y]$. We know that $R[y]/\m R[y]$ is a monogene finite $k$-algebra $k[y]$, but we don't have access
 to its rank constructively in general. We also know that $k[y][1/g]$ is a monogene separable algebra $k[X]/(p)$ for some polynomial $p$.
 This implies that we have $g^np(y) = 0$ in $k[y]$ for some $n$. Let $G$ in $R[X]$ such that $G(y) = g$. 

 Since $R[y]$ is finite, $y$ satisfies a monic equation $Q_0(y) = 0$ for some monic $Q_0$ in $R[X]$. We define recursively the sequence
 of monic polynomials $Q_l$ of decreasing degree $d(Q_l)$ in $R[X]$, all satisfying $Q_l(y) = 0$ in $R[y]$, starting with $Q_0$ and stopping when $d(Q_{l+1})=d(Q_l)$.
  Let $P$ be $gcd(Q_l,G^np)$ in $k[X]$.
  We have $P(y) = 0$ in $k[y]$ since both $Q_l(y) = 0$ and $G(y)^np(y) = 0$. Let $d = d(P)\leqslant d(Q_l)$ be the degree of $P$.
 We have that $1,y,\dots,y^{d-1}$ generate $k[y] = R[y]/\m R[y]$, and so,
 using Nakayama's Lemma, also generate $R[y]$.
 It follows that $y$ also satisfies a monic equation $Q_{l+1}(y) = 0$ for some monic $Q_{l+1}$ in $R[X]$ of degree $d$.
 So after a finite number of steps, we obtain $Q = Q_l$ monic in $R[X]$ such that $Q(y) = 0$ in $k[y]$ and $Q$ divides $G^np$ in $k[X]$.

 Consider now $R[a]= R[X]/(Q)$, and $S= R[a][1/G(a)]$. We have $G^n(a) p(a) = 0$ in $S/\m S$ and $G(a)$ invertible in $S$, so $p(a) = 0$ in $S/\m S$.
 Also, if $q(a) = 0$ in $S/\m S$ for some $q$ in $k[X]$, we have that $Q$ divides $G^mq$ for some $m$ in $k[X]$ and so $q(y) = 0$ in $k[y][1/g]$
 and so $d(p)\leqslant d(q)$.
 It follows that the
 canonical map $u:S\rightarrow R[y][1/g] = E$ is a surjection such that $S/\m S\rightarrow E/\m E$ is an isomorphism\footnote{Note that, contrary
 to Raynaud \cite{Raynaud}, we cannot be sure to have the canonical map $R[a]\rightarrow R[y]$ to be an isomorphism
 residually, since this would amount to be able to compute the rank of $k[y]$; but it becomes a bijection residually when we localise.}.

 All this works so far only assuming $E$ (finitely presented) unramified. 

\section{What happens if $E$ is \'etale}

 We have a surjection $u:S\rightarrow E$, and $S$ and $E$ are finitely presented, so the kernel $I$ is an ideal of $S$
 which is finitely generated. Furthermore $S/\m S = E/\m E = k[y][1/g]$.

 We have the exact sequence $0\rto I/I^2\rto S/I^2\rto E\rto 0$, and, since $E$ is \'etale and $I/I^2$ of square $0$, this exact
 sequence is split (in the sense of existence of a section). Furthermore, if an exact sequence $0\rto A\rto B\rto C\rto 0$ is split, then the sequence
 $0\rto A/\m A\rto B/\m B\rto C/\m C\rto 0$ is still exact.
 Since $S/I^2\rightarrow E$ is an isomorphism mod. $\m$, it follows that $I/I^2$ is $0$ mod. $\m$, and so, since
 $I$ is finitely generated, we have $I = I^2$ by Nakayama. This implies that $I$ is generated by an idempotent element $e = 1-f$.
 Thus we get $f$ in $S$ such that $I[1/f] = 0$ and $u(f) = 1$. Then $S[1/f]$ is isomorphic to $E$ which is standard \'etale.

\section{A global version}

We now take $R$ an arbitrary ring and $E$ a finitely presented \'etale $R$-algebra.

\begin{theorem}
  There exists $s_1,\dots,s_m$ in $E$ that are comaximal and $f_1,\dots,f_m$ in $R$ comaximal in $E$ such that each
  $E[1/s_i]$ is isomorphic to a standard \'etale $R[1/f_i]$-algebra.
\end{theorem}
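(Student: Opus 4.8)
The plan is to deduce Theorem 2 from Theorem 1 by the constructive local--global machinery with prime ideals of \cite{LQ}, which systematically turns a construction valid for every residually discrete local ring into a family of comaximal localizations of an arbitrary base. Concretely, I would reread the proof of Theorem 1 over $R$, treating $R$ \emph{dynamically} as if it were local: each time that proof performs a step only legitimate over a residue field --- deciding whether an element is zero or invertible, taking a $\gcd$ in $k[X]$, or reading off the rank of $k[y]$ --- I open finitely many comaximal branches $R\rto R[1/f]$ in which the step acquires a definite answer. A branch is closed once the local construction has been carried out in it, and it then contributes one pair $(f_i,s_i)$, with $f_i\in R$ the product of the elements inverted along that branch and $s_i\in E$ the localizing element produced as in Sections 3--4.

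The first task is to isolate exactly where Theorem 1 uses locality. Zariski's Main Theorem \cite{ACL}, which supplies the finite subalgebra $B$ and the comaximal $s_i\in E$ with $B[1/s_i]\cong E[1/s_i]$, already holds over an arbitrary ring, so that step is unconditional. The appeals to Nakayama in Sections 3 and 5 are effected by the determinant trick of the footnote and by the fact that a finitely generated idempotent ideal is generated by an idempotent; both are constructive over any ring, once the relevant module is seen to be annihilated after a further localization. Hence the only genuinely local ingredients are the decomposition of the finite residue algebra $A=B/\m B$ into monogene separable extensions and the $\gcd$ computation in $k[X]$ of Section 4, both resting on $k$ being a discrete field.

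Replacing this residue-field analysis by a comaximal branching over $R$ is therefore the heart of the matter and the main obstacle: over a general base we can neither split the finite residue algebra into monogene pieces nor compute the rank of $k[y]$. The remedy is to invert, in each branch, the finitely many elements of $R$ whose invertibility the separable structure forces --- essentially the discriminant-type units supplied by the \'etale hypothesis --- so that over $R[1/f_i]$ the module $R[y]$ becomes free of a determined rank and the polynomial $P$ of Section 4 can be produced exactly as before; Section 5 then runs in each branch to give an idempotent exhibiting $E[1/s_i]$ as standard \'etale over $R[1/f_i]$. It remains to settle the bookkeeping: the branch conditions at each node are chosen comaximal, so the collected localizations cover, and the discarding of trivial residue pieces seen in Theorem 1 reappears here as the fact that comaximality of the families $(f_i)$ and $(s_i)$ is guaranteed only in $E$, not in $R$ --- which is precisely the form asserted.
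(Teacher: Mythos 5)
Your proposal is correct and follows essentially the same route as the paper: the paper introduces a generic (decidable) prime ideal $q$ of $E$ with $p = q\cap R$, runs the local argument of the previous sections over the local ring $R_p$, and then invokes ``dynamical methods'' to convert the resulting $s\notin q$ and $f\notin p$ into the comaximal families --- which is exactly the branch-opening procedure you describe. The paper's own proof is in fact terser than yours, and your identification of precisely where locality is used is a faithful unpacking of its appeal to the dynamical machinery of \cite{LQ}.
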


\begin{proof}
  We introduce a generic (decidable) ideal prime $q$ of $E$, and $p = q\cap R$. We applied the arguments in the previous
  sections to $E_p$ which is \'etale on $R_p$, which is a local ring. We then get $s$ not in $q$ and $f$ in $R$ not in $q$
  such that $E[1/s]$ is isomorphic to a $R[1/f]$ \'etale algebra. Using dynamical methods, we get the statement.
 \end{proof}

It follows, as in Raynaud \cite{Raynaud}, that each \'etale algebra is flat.

We can refine this statement as follows\footnote{I got this argument from Ofer Gabber.}.

\begin{theorem}
  There exists $s_1,\dots,s_m$ in $E$ that are comaximal and $f_1,\dots,f_m$ in $R$ comaximal in $E$ such that each
  $E[1/s_i]$ is isomorphic to a standard \'etale $R$-algebra.
\end{theorem}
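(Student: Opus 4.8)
The plan is to start from the presentation furnished by the previous theorem and then to \emph{clear denominators} in the defining monic polynomial by a scaling substitution. The key observation making this possible is that a standard \'etale $R$-algebra is allowed to invert \emph{arbitrary} elements of $R[X]/(Q)$ — in particular the image of $f_i$ — so that we never need $f_i$ to become invertible in $R$ itself; it suffices that $f_i$ is already invertible in $E[1/s_i]$, which it is because $E[1/s_i]$ is an $R[1/f_i]$-algebra.

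Concretely, I would fix one index $i$ and apply the previous theorem to obtain $s_i$ and $f_i$ with $E[1/s_i]$ isomorphic to a standard \'etale $R[1/f_i]$-algebra, say $R[1/f_i][X][1/h]/(P)$ with $P$ monic of degree $d$ and $P'$ invertible in the quotient. Writing the coefficients of $P$ over a common denominator $f_i^m$, I would introduce the new variable $Y = f_i^N X$ and set $Q(Y) = f_i^{Nd}\,P(Y/f_i^N)$. For $N$ large enough the coefficient of $Y^{d-j}$, which equals $c_{d-j}f_i^{Nj}$, lies in $R$ for every $j\geq 1$, so $Q$ is monic in $R[X]$ and $R[Y]/(Q)$ is free of rank $d$ over $R$. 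Over $R[1/f_i]$ the assignment $Y\mapsto f_i^N X$ then gives an isomorphism $R[1/f_i][Y]/(Q)\cong R[1/f_i][X]/(P)$.

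Next I would transport the localization. Since the image of $f_i$ is invertible in $E[1/s_i]$, I can rewrite
\[
  E[1/s_i] \;\cong\; R[1/f_i][Y][1/\tilde h]/(Q) \;\cong\; \bigl(R[Y]/(Q)\bigr)[1/(f_i\tilde h)],
\]
where $\tilde h$ is obtained from $h$ by the substitution and by clearing the powers of $f_i$ into the factor $f_i$. Setting $G_i = f_i\tilde h$, this exhibits $E[1/s_i]$ as $\bigl(R[Y]/(Q)\bigr)[1/G_i]$, a localization of a finite free $R$-algebra defined by a monic polynomial. To verify the \'etale condition, I differentiate $Q(Y)=f_i^{Nd}P(Y/f_i^N)$ to get $Q'(Y)=f_i^{N(d-1)}P'(Y/f_i^N)$, which under the isomorphism corresponds to $f_i^{N(d-1)}P'(X)$; being a product of units of $E[1/s_i]$, it is invertible, so $E[1/s_i]$ is standard \'etale over $R$. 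The same $s_i$ and $f_i$ serve for all $i$, and their comaximality is inherited verbatim from the previous theorem.

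The step I expect to be the crux is this transport of the localization: one must be certain that folding the inversion of $f_i$ into the single localizing element $G_i\in R[Y]/(Q)$ is legitimate, which rests precisely on the fact — guaranteed by the previous theorem — that $f_i$ is invertible in $E[1/s_i]$. Once that is granted, clearing denominators and checking that $Q'$ remains a unit are routine, and no further dynamical gluing is needed, since the construction is uniform in $i$.
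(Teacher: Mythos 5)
Your argument is correct and is essentially the paper's own proof: the paper likewise reduces to showing that a standard \'etale $R[1/f]$-algebra is standard \'etale over $R$ by passing to a monic degree-$d$ polynomial over $R$ lifting $f^{nd}P(X/f^n)$ for $n$ large, with the inversion of $f$ absorbed into the localizing element. The only pedantic point worth adding is that the scaled coefficients $c_{d-j}f_i^{Nj}$ lie in the \emph{image} of $R$ in $R[1/f_i]$ (the map $R\to R[1/f_i]$ need not be injective), so one must choose lifts in $R$ --- exactly the word ``lifts'' in the paper --- but this affects nothing downstream.
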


\begin{proof}
  Using the previous Theorem, it is enough to show that a standard \'etale $R[1/f]$-algebra is also a standard \'etale $R$-algebra.
  Indeed, for a monic degree $d$ polynomial $P$ over $R[1/f]$,
  one can pass to a monic degree $d$ polynomial over $R$ which lifts $f^{nd} P(X/f^n)$ for $n$ big enough
  and defines the same extension of $R[1/f]$.
\end{proof}

\section{A basic example}

Consider $E = R[X]/(1-Xt)$ with $R,\m$ local and $t$ in $\m$.

If we apply the local version, we get no information since $E/\m E$ is trivial.

If we apply the global version, we get the family $1$ in $E$ and $t$ in $R$ such that $E$ is isomorphic to $R[1/t]$.

\section{What happens if $E$ is flat and unramified}

Following \cite{Raynaud}, we can prove that if $E$ is {\em flat} and (finitely presented) unramified,
then $E$ is locally standard \'etale, and so \'etale.

As before, let $I$ is the kernel of the map $u:S\rightarrow E$. We have the exact sequence
$0\rightarrow I\rightarrow S\rightarrow E\rightarrow 0$.
Furthermore the map $S/\m S\rightarrow E/\m E$ is an isomorphism.
Since $I$ is a finitely generated ideal of $S$, it is enough, by Nakayama, to show that $I/\m I = 0$.
We can then use the following Lemma (instantiated with $A = R$ and $J = \m$) to conclude,
since $E$ is flat over $R$. We use the constructive definition of flatness in \cite{LQ}. 

\begin{lemma}
  Let $L,M,N$ be $A$-module and $J$ an ideal of $A$. We assume $u:L\rightarrow M$ and $v:M\rightarrow N$
  such that $0\rightarrow L\rightarrow M\rightarrow N\rightarrow 0$
  is exact. If the module $N$ is flat, then the map $L/JL\rightarrow M/JM$ is injective.
\end{lemma}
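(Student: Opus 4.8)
The plan is to reduce injectivity of $L/JL \rightarrow M/JM$ to the constructive (equational) characterization of flatness recalled in \cite{LQ}: flatness of $N$ asserts that any relation $\sum_i a_i n_i = 0$ in $N$, with $a_i \in A$ and $n_i \in N$, can be factored through relations among the $a_i$; that is, there exist $y_1,\dots,y_p$ in $N$ and a matrix $(b_{ij})$ over $A$ with $n_i = \sum_j b_{ij} y_j$ and $\sum_i a_i b_{ij} = 0$ for each $j$.

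Concretely, I would take an element $x$ of $L$ with $u(x)\in JM$, write $u(x) = \sum_i a_i m_i$ with all $a_i\in J$ and $m_i\in M$, and aim to exhibit $x$ as an element of $JL$. Setting $n_i = v(m_i)$ and applying $v$, the identity $v\circ u = 0$ coming from exactness yields the relation $\sum_i a_i n_i = 0$ in $N$.

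The decisive step is to apply flatness of $N$ to this relation, obtaining $y_j\in N$ and $b_{ij}\in A$ with $n_i = \sum_j b_{ij} y_j$ and $\sum_i a_i b_{ij} = 0$. Using surjectivity of $v$, I would choose lifts $\tilde y_j\in M$ of the $y_j$. Then each $m_i - \sum_j b_{ij}\tilde y_j$ maps to $0$ under $v$, so by exactness at $M$ it equals $u(l_i)$ for some $l_i\in L$.

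It then remains to substitute back and compute
\[
  u(x) = \sum_i a_i m_i = u\Bigl(\sum_i a_i l_i\Bigr) + \sum_j \Bigl(\sum_i a_i b_{ij}\Bigr)\tilde y_j ,
\]
where the rightmost sum vanishes because $\sum_i a_i b_{ij} = 0$ for every $j$. Since $u$ is injective we conclude $x = \sum_i a_i l_i$, which lies in $JL$ as each $a_i\in J$. The only genuine subtlety is that flatness must be used in its equational form rather than through the vanishing of $\mathrm{Tor}_1^A(N,A/J)$, which is the classical shortcut we are deliberately avoiding; once the factorization $(b_{ij})$ has been produced, everything else is a direct and constructive computation.
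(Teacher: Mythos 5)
Your proposal is correct and follows essentially the same argument as the paper: both use the equational characterization of flatness to factor the relation $\sum_i a_i v(m_i)=0$, lift through the surjection $v$, absorb the difference into the image of $u$, and conclude by injectivity of $u$; the paper merely writes the indices in matrix--vector notation. No gap to report.
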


\begin{proof}
   Let $x$ be in $L$ such that $u(x) = 0$ in $M/JM$.
  We then as a relation $u(x) = rm$ for some column vector $r$ in $J$ and some row vector $m$ in $M$.
  We then have $0 = v(u(x)) = rv(m)$ in $N$. Since $N$ is flat, there exists a matrix $p$ in $A$ and
  a row vector $n$ in $N$ such that $rp = 0$ and $pn = v(m)$. Since $v$ is surjective we can find $m_1$ in $M$
  such that $n = v(m_1)$. We then have $v(m-pm_1) = 0$ and so $m-pm_1 = u(l)$ for some $l$ row vector in $L$.
  We then have $u(x) = rm = rm -rpm_1 = ru(l) = u(rl)$ and so $x = rl$ since $u$ is injective. But this means
  that $x=0$ in $L/JL$ as wanted.
\end{proof}


\begin{thebibliography}{9}

\bibitem[ACL]{ACL}
M.~Alonso, T.~Coquand, and H.~Lombardi.
\newblock{Revisiting Zariski Main Theorem from a constructive point of view.}
\newblock{Journal of Algebra, Vol. 406, 46-68, 2014.}

\bibitem[Gr]{Gr}
  A. Grothendieck
  \newblock{{\em \'El\'ements de g\'eom\'etrie alg\'ebrique : IV. \'Etude locale des sch\'emas et des morphismes de sch\'emas.}}
  \newblock{Publications Math\'ematiques de l'IH\'ES, Volume 32 (1967), pp. 5-361.}


\bibitem[L]{L}
  H.\ Lombardi.
  \newblock{Un th\'eor\`eme d\'ecisif}
  \newblock{arXiv 2506.07098, 2025.}


\bibitem[LQ]{LQ}
  H.\ Lombardi and C.\ Quitté.
\newblock\emph{Commutative Algebra. Constructive Methods.}
\newblock{Springer, 2015.}



\bibitem[Ray]{Raynaud}
M.~Raynaud.
\newblock \emph{Anneaux locaux hens\'eliens.}
\newblock Lecture Notes in Mathematics, Vol.~169, Springer, 1970.

\end{thebibliography}
\end{document}